\renewenvironment{proof}[1][\proofname] {\par\pushQED{\qed}\normalfont\topsep6\p@\@plus6\p@\relax\trivlist\item[\hskip\labelsep\bfseries#1\@addpunct{.}]\ignorespaces}{\popQED\endtrivlist\@endpefalse}
\newtheorem{theorem}{\bf Theorem}[section]
\newtheorem{lemma}[theorem]{\bf Lemma}
\newtheorem{corollary}[theorem]{\bf Corollary}
\newtheorem{proposition}[theorem]{\bf Proposition}
\theoremstyle{definition}
\newtheorem{definition}[theorem]{\bf Definition}
\def\eps{\varepsilon}
\title{Improved bounds for the Erd\H{o}s-Rogers $(s,s+2)$-problem}
\author{Oliver Janzer\thanks{Department of Pure Mathematics and Mathematical Statistics, University of Cambridge, United Kingdom. Research supported by a fellowship at Trinity College. Email: \textbf{oj224@cam.ac.uk}.}
	\and
	Benny Sudakov\thanks{Department of Mathematics, ETH Z\"urich, Switzerland. Research supported in part by SNSF grant 200021\_196965. Email: \textbf{benjamin.sudakov@math.ethz.ch}.}}
\date{}
\begin{document}

\maketitle

\begin{abstract}
    For $2\leq s<t$, the Erd\H os-Rogers function $f_{s,t}(n)$ measures how large a $K_s$-free induced subgraph there must be in a $K_t$-free graph on $n$ vertices. There has been an extensive amount of work towards estimating this function, but until very recently only the case $t=s+1$ was well understood. A recent breakthrough of Mattheus and Verstra\"ete on the Ramsey number $r(4,k)$ states that $f_{2,4}(n)\leq n^{1/3+o(1)}$, which matches the known lower bound up to the $o(1)$ term. In this paper we build on their approach and generalize this result by proving that $f_{s,s+2}(n)\leq n^{\frac{2s-3}{4s-5}+o(1)}$ holds for every $s\geq 2$. This comes close to the best known lower bound, improves a substantial body of work and is the best that any construction of similar kind can give.
\end{abstract}

\section{Introduction}

Given an integer $s\geq 2$, a set $U$ of vertices in a graph $G$ is said to be $s$-independent if $G[U]$ does not contain a copy of $K_s$. We write $\alpha_s(G)$ for the size of the largest $s$-independent set in $G$, so $\alpha_2(G)$ is just the usual independence number of $G$. Note that estimating Ramsey numbers is equivalent to estimating how small $\alpha_2(G)$ can be for a $K_t$-free graph on $n$ vertices. In 1962, Erd\H os and Rogers~\cite{ER62} initiated a natural generalization of this problem. For $2\leq s<t\leq n$, they defined $f_{s,t}(n)$ as the minimum of $\alpha_s(G)$ over all $K_t$-free graphs $G$ on $n$ vertices. This function is now commonly known as the Erd\H os-Rogers function, and the main research direction has been to estimate its growth as $n\rightarrow \infty$ for various fixed values of $s$ and $t$. (That said, we note that the problem of estimating, for fixed $s,m$ and $n\rightarrow \infty$, the smallest $t$ such that $f_{s,t}(n)\leq m$ has also been extensively studied: see \cite{BS23} and its references.) As remarked above, the case $s=2$ recovers the usual Ramsey problem: we have $f_{2,t}(n)<\ell$ if and only if $r(t,\ell)>n$.

For the case $s>2$, the first bounds were obtained by Erd\H os and Rogers \cite{ER62}, who showed that for every $s$ there exists a positive $\eps=\eps(s)$ such that $f_{s,s+1}(n)\leq n^{1-\eps}$. Noting that $f_{s,t}(n)\geq f_{s,t'}(n)$ holds for every $t<t'$, this implies the same bound for all pairs $(s,t)$. The first lower bound was given by Bollob\'as and Hind \cite{BH91}, who proved that $f_{s,t}(n)\geq n^{1/(t-s+1)}$. In particular, this showed that $f_{s,s+1}(n)\geq n^{1/2}$. Krivelevich \cite{Kri94,Kri95} improved these lower bounds by a small logarithmic factor and gave the general upper bound $f_{s,t}(n)=O(n^{\frac{s}{t+1}}(\log n)^{\frac{1}{s-1}})$. Later, the lower bound was significantly improved by Sudakov \cite{Sud05comb,Sud05rsa} for every $t\geq s+2$.

In the last decade or so, there has been significant progress on estimating $f_{s,s+1}$. First, Dudek and R\"odl \cite{DR11} proved that $f_{s,s+1}(n)=O(n^{2/3})$ for all $s$, bounding the exponent away from 1. Building on their approach but introducing further ideas, Wolfovitz \cite{Wol13} showed that $f_{3,4}(n)\leq n^{1/2+o(1)}$, matching the lower bound up to the $o(1)$ term in the exponent. Finally, Dudek, Retter and R\"odl~\cite{DRR14} generalized this for all $s$ by proving that $f_{s,s+1}(n)\leq n^{1/2+o(1)}$, which once again matches the lower bound. More precisely, their result states that $f_{s,s+1}(n)=O(n^{1/2}(\log n)^{4s^2})$. The current best lower bound is $f_{s,s+1}(n)= \Omega(n^{1/2}(\frac{\log n}{\log \log n})^{1/2})$, due to Dudek and Mubayi \cite{DM14}, whereas the current best upper bound is $f_{s,s+1}(n)=O(n^{1/2}(\log n)^{3-\frac{4}{s+2}})$, due to Mubayi and Verstra\"ete \cite{MV24}.

With the case $t=s+1$ settled (up to logarithmic factors), it is natural to study what happens when $t=s+2$. The lower bound of Sudakov for this case is $f_{s,s+2}(n)\geq n^{1/2-\frac{1}{6s-6}}(\log n)^{\Omega(1)}$. Dudek, Retter and R\"odl showed that for every $s\geq 4$, we have $f_{s,s+2}(n)=O(n^{1/2})$ and asked if there exists $s\geq 3$ such that $f_{s,s+2}(n)=o(n^{1/2})$, This was answered affirmatively in a strong form by Gowers and Janzer \cite{GJ20}, who proved that for each $s\geq 3$ we have $f_{s,s+2}(n)\leq n^{1/2-\frac{s-2}{8s^2-18s+8}}(\log n)^{O(1)}$. They also improved the best upper bound throughout the range $s+2\leq t\leq 2s-1$.

In a recent breakthrough, Mattheus and Verstra\"ete \cite{MV23} proved that the Ramsey number $r(4,k)$ satisfies $r(4,k)=\Omega(\frac{k^3}{\log^4 k})$, which matches the known upper bound up to a factor of order $\log^2 k$. Expressed in terms of the Erd\H os-Rogers function, their result is equivalent to the bound $f_{2,4}(n)=O(n^{1/3}(\log n)^{4/3})$. The main result in this paper is a generalization of their bound to $f_{s,s+2}$ for all values of $s$.

\begin{theorem} \label{thm:ERnewbound}
    For every $s\geq 2$, $f_{s,s+2}(n)=O\left(n^{\frac{2s-3}{4s-5}}(\log n)^3\right)$.
\end{theorem}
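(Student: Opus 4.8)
The plan is to follow and extend the approach of Mattheus and Verstra\"ete. Their construction for $f_{2,4}$ begins with a dense pseudorandom host graph --- the collinearity graph of a generalised quadrangle --- whose essential features are that it is regular, that it can be made locally $K_3$-free by a small random perturbation of its neighbourhoods, and that all of its small codegrees are $O(1)$. For general $s$ I would first isolate exactly those properties and then build a host graph $G_0$ from a suitable polar space or generalised polygon --- or, if a clean geometry is not available, recursively from a good construction for a smaller Erd\H{o}s--Rogers instance --- so that the neighbourhoods of $G_0$ become (perturbed) $K_{s+1}$-free, hence $G_0$ is $K_{s+2}$-free. The host $G_0$ should be quasirandom of degree $D$ on $N$ vertices, with $N$ and $D$ appropriate powers of a prime power $q$ and with $O(\mathrm{polylog})$ codegrees.

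Given $G_0$, the construction proper is: pass to the random induced subgraph $G_0[R]$ in which each vertex is kept independently with probability $p$; superimpose a random graph of density $\rho$ to break up the largest $K_s$-free sets that survive; and delete one vertex from every remaining copy of $K_{s+2}$. A first moment computation --- using the bounded codegrees of $G_0$ to count the cliques inherited from the host as well as those created by the new random edges --- shows that for the right $p,\rho$ the number of copies of $K_{s+2}$ is $o(pN)$, so the deletion removes only a negligible fraction of vertices and leaves a $K_{s+2}$-free graph $G$ on $n \asymp pN$ vertices.

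The core is the bound on $\alpha_s(G)$. I want that, with positive probability, $G$ has no $K_s$-free set of size $m$, where $m$ equals the target $n^{\frac{2s-3}{4s-5}}$ up to logarithmic factors. By a union bound it suffices that, summed over all $K_s$-free subsets $S$ of $G_0$ of size about $m/p$, the probability that $S$ lands inside $R$ and is not completed to contain a $K_s$ by the random edges tends to $0$; the first event has probability $p^{|S|}$, so the bottleneck is an upper bound on the number of $K_s$-free subsets of $G_0$ of that size. This is exactly where the hypergraph container method is needed: the supersaturation property of $G_0$ for copies of $K_s$ --- every set of $\Theta(\alpha_s(G_0))$ vertices spans many such copies, by quasirandomness --- yields a family of few containers of size $O(\alpha_s(G_0))$ covering all $K_s$-free sets, after which the union bound collapses to a binomial estimate. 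The three logarithmic factors in the statement are the combined cost of the container bound and of taking $p$, $\rho$ and $m$ the needed logarithmic factor away from their ideal values; optimising $q$, $p$ and $\rho$ against $n$ then yields the exponent $\frac{2s-3}{4s-5}$.

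I expect the main obstacle to be the host graph $G_0$ itself for $s>2$: generalised quadrangles are an exceptionally well-behaved family, and identifying the correct higher-$s$ analogue with the right pseudorandomness and codegree parameters --- or, alternatively, engineering $G_0$ recursively from a smaller Erd\H{o}s--Rogers construction without forfeiting the quantitative control that the container argument requires --- is the crux. Once $G_0$ is in place, the random perturbation, the clique-deletion step, and the container-based union bound for $\alpha_s$ should all go through as adaptations of the $s=2$ argument, with the codegree and supersaturation estimates carrying over almost formally.
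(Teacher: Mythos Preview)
Your high-level framework --- random block construction, hypergraph containers to count $K_s$-free sets, then random subsampling and cleanup --- matches the paper. But the place you flag as the crux, the host graph for $s>2$, is exactly where you are missing the paper's main idea, and your proposed mechanism for $K_{s+2}$-freeness is not the right one.

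The paper does \emph{not} look for a new incidence geometry for each $s$. It uses the very same O'Nan bipartite graph $F$ (with parts $X$ and $Y$) that Mattheus and Verstra\"ete used for $s=2$; the only change is that for each $y\in Y$ one randomly splits $N_F(y)$ into $s$ parts and inserts a complete $s$-partite graph, rather than a complete bipartite one. The resulting graph $H$ on $X$ is $K_{s+2}$-free \emph{deterministically}, but not for the reason you suggest (neighbourhoods being $K_{s+1}$-free). The mechanism is a short combinatorial lemma: if the edge set of a $K_{s+2}$ is partitioned into cliques of size at most $s$, then four of its vertices have all six pairwise edges in distinct cliques. Since $F$ is $C_4$-free, the complete $s$-partite blocks give such a clique partition of any putative $K_{s+2}$ in $H$, and the four vertices produced by the lemma would force a subdivided $K_4$ in $F$ with the branch vertices in $X$ --- which $F$ does not contain. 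This replaces your ``perturb neighbourhoods to be $K_{s+1}$-free'' step entirely.

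Consequently, two ingredients of your plan are unnecessary: there is no superimposed random graph of density $\rho$, and there is no first-moment deletion of copies of $K_{s+2}$. The only deletion is of one vertex from each surviving $K_s$-free set of size $t$ after subsampling, and the container argument (run iteratively on large subsets of $X$, using that $H$ is ``nice'' in the sense that many $y$'s contribute balanced $s$-partite blocks to any large set) shows there are few enough such sets that this costs $o(n)$ vertices. Your container outline is correct in spirit, but note that the relevant supersaturation is not generic quasirandomness: it comes from controlling, for each large $U\subset X$, how many $y\in Y$ have all $s$ parts $A_i(y)\cap U$ of comparable size, which in turn bounds the codegrees of the auxiliary $s$-uniform hypergraph of $K_s$'s.
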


\noindent We remark that we did not try to optimize the logarithmic term.

It is not hard to see that this improves the bound of Gowers and Janzer on $f_{s,s+2}$ for every $s$. (Using the inequality $f_{4,7}(n)\leq f_{4,6}(n)$, it also improves the best known bound for $f_{4,7}$.) The smallest case where the result is new is $f_{3,5}(n)\leq n^{3/7+o(1)}$, improving the previous bound $f_{3,5}(n)\leq n^{6/13+o(1)}$ and coming close to the lower bound $f_{3,5}(n)\geq n^{5/12+o(1)}$ of Sudakov.

Furthermore, there is some evidence suggesting that the bound in Theorem \ref{thm:ERnewbound} is tight. First, it is tight for $s=2$. Moreover, it can be shown that no construction of the kind used in all recent works on the Erd\H os-Rogers function (including those providing the tight results for $f_{s,s+1}$) can beat this bound (this will be explained in more detail in the concluding remarks).

\section{The proof}

The proof of Theorem \ref{thm:ERnewbound} has the same rough structure as that of the bound $r(4,k)=\Omega(\frac{k^3}{\log^4 k})$ in \cite{MV23}, but requires several new ideas and twists of their method.
The following graph, studied in \cite{ONan72}, played a crucial role in \cite{MV23}.

\begin{proposition}[\cite{ONan72} or \cite{MV23}] \label{prop:algebraic graph}
    For every prime $q$, there is a bipartite graph $F$ with vertex sets $X$ and $Y$ such that the following hold.
    \begin{enumerate}
        \item $|X|=q^4-q^3+q^2$ and $|Y|=q^3+1$.
        \item $d_F(x)=q+1$ for every $x\in X$ and $d_F(y)=q^2$ for every $y\in Y$.
        \item $F$ is $C_4$-free. \label{prop:C4-free}
        \item $F$ does not contain the subdivision of $K_4$ as a subgraph with the part of size $4$ embedded to $X$. \label{prop:subdivisionfree}
    \end{enumerate}
\end{proposition}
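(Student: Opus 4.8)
The plan is to realize $F$ as the incidence graph of the Hermitian unital. Work in the projective plane $\mathrm{PG}(2,q^2)$, let $\mathcal H$ be the Hermitian curve $X_0^{q+1}+X_1^{q+1}+X_2^{q+1}=0$, take $Y$ to be the set of the $q^3+1$ points of $\mathcal H$, take $X$ to be the set of secant lines (lines of $\mathrm{PG}(2,q^2)$ meeting $\mathcal H$ in more than one point), and join $y\in Y$ to $x\in X$ precisely when the point $y$ lies on the line $x$; call the resulting design $UH(q)$.

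Properties (1)--(3) then reduce to standard facts about $\mathcal H$, which I would verify briefly. Every line of $\mathrm{PG}(2,q^2)$ meets $\mathcal H$ in either $1$ point (a tangent) or $q+1$ points (a secant); there is a unique tangent through each point of $\mathcal H$, and distinct points of $\mathcal H$ have distinct tangents. Hence the number of secants is $(q^4+q^2+1)-(q^3+1)=q^4-q^3+q^2=|X|$; each secant carries $q+1$ points of $\mathcal H$, so $d_F(x)=q+1$; and each point of $\mathcal H$ lies on $q^2+1$ lines, one of which is its tangent, so $d_F(y)=q^2$. For (3), a $C_4$ in $F$ would consist of two points of $\mathcal H$ lying on two common secants, contradicting the fact that two distinct points of $\mathrm{PG}(2,q^2)$ determine a unique line; equivalently, $UH(q)$ is a $2$-$(q^3+1,q+1,1)$ design and incidence graphs of linear spaces have no $C_4$.

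The heart of the proposition is (4), and here the key input is O'Nan's theorem \cite{ONan72}: $UH(q)$ contains no O'Nan configuration, that is, there do not exist four secants, no three of them concurrent, whose six pairwise intersection points all lie on $\mathcal H$. The link to $K_4$-subdivisions is this: in the bipartite graph $F$ every path between two vertices of $X$ has even length, and a path of length exactly $2$ joining $x_i,x_j\in X$ has its midpoint equal to the unique point $x_i\cap x_j$, which therefore lies on $\mathcal H$. Consequently, a subdivision of $K_4$ with branch vertices $x_1,x_2,x_3,x_4\in X$ \emph{all six of whose paths have length $2$} is precisely an O'Nan configuration: distinctness of the branch vertices and internal disjointness of the paths force the four secants to be distinct with no three concurrent and the six midpoints to be distinct. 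Such a configuration is forbidden, so it remains to eliminate subdivisions in which some path has length at least $4$, and I expect this to be the main obstacle. The plan is a minimality argument: among all subdivisions of $K_4$ in $F$ with branch vertices in $X$, choose one with the fewest edges, and use the $C_4$-freeness (girth $6$) together with the linear-space structure of $UH(q)$ (two points lie on a unique secant, two secants meet in a unique point) to reroute an internal line-vertex off a long path, so that in a minimum configuration every path has length $2$; this yields an O'Nan configuration, a contradiction.

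Lastly I would point out why the restriction of the branch set to $X$ in (4) is essential, which also explains why O'Nan's theorem is exactly the tool needed: for any four points of $\mathcal H$ with no three collinear, the six lines joining them in pairs are distinct secants, and these six lines, used as midpoints, form a subdivision of $K_4$ in $F$ with all four branch vertices in $Y$. Thus the point-sided analogue of (4) is false, and (4) is genuinely the line-sided statement that O'Nan proved.
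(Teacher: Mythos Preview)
The paper does not give its own proof of this proposition; it is quoted from \cite{ONan72} and \cite{MV23}. Your construction via the Hermitian unital is the intended one, and your treatment of (1)--(3) is correct.

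For (4), however, you have over-read the statement. The phrase ``the subdivision of $K_4$ \dots\ with the part of size $4$'' names one fixed bipartite graph, the $1$-subdivision of $K_4$ on $4+6$ vertices, not an arbitrary topological $K_4$. This is also how the paper uses the proposition immediately afterwards: it produces four vertices of $X$ and six pairwise distinct common neighbours in $Y$, one for each pair, and invokes property~\ref{prop:subdivisionfree}. Under the correct reading your identification of the forbidden subgraph with the O'Nan configuration is exactly right, and O'Nan's theorem gives (4) at once; the ``main obstacle'' you describe (paths of length $\ge 4$) never arises, and no minimality argument is needed.

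It is worth noting that the minimality argument you sketch for the stronger reading would not have worked in any case. Shortening a length-$4$ path between secants $x_i$ and $x_j$ to length $2$ forces its midpoint to be the point $x_i\cap x_j$ of $\mathrm{PG}(2,q^2)$, and nothing guarantees that this point lies on $\mathcal H$; when it does not, $x_i$ and $x_j$ have no common neighbour in $F$ whatsoever, so the path cannot be rerouted. For large $q$ the graph $F$ is highly connected and does contain topological copies of $K_4$ with all four branch vertices in $X$ and some internal paths of length $\ge 4$, so the stronger assertion you were aiming at is actually false.
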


Throughout this section, let $s\geq 2$ be a fixed integer. Let $q$ be a prime and let $F$ be the graph provided by Proposition \ref{prop:algebraic graph}. We now construct a $K_{s+2}$-free graph $H$ on vertex set $X$ randomly as follows. 
For each $y\in Y$, partition $N_F(y)$ uniformly randomly as $A_1(y)\cup A_2(y)\cup \dots \cup A_s(y)$ and place a complete $s$-partite graph in $H$ with parts $A_1(y), A_2(y),\dots , A_s(y)$. The following lemma, combined with properties \ref{prop:C4-free} and \ref{prop:subdivisionfree} of Proposition \ref{prop:algebraic graph}, shows that $H$ is $K_{s+2}$-free with probability 1.

\begin{lemma} \label{lem:clique partition}
    Assume that the edge set of a $K_{s+2}$ is partitioned into cliques $C_1,\dots,C_k$ of size at most $s$. Then there exist four vertices such that all six edges between them belong to different cliques $C_i$.
\end{lemma}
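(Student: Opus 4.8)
Let $K = K_{s+2}$ have vertex set $V$ with $|V| = s+2$, and let $C_1, \dots, C_k$ be the clique partition of its edge set. For each vertex $v \in V$, the edges at $v$ are partitioned among the cliques; since each $C_i$ containing $v$ can use at most $s-1$ of the $s+1$ edges at $v$, and there are $s+1$ edges at $v$, vertex $v$ lies in at least two of the cliques. The plan is to assign to each vertex $v$ a pair (or set) of "colors" recording which cliques it belongs to, and then to find four vertices whose six connecting edges all get distinct clique-labels. The key structural observation is that two vertices $u, v$ lie in a common clique $C_i$ if and only if the edge $uv$ is "colored" by that $C_i$; so two vertices sharing no common clique automatically have the property that the edge between them is the unique edge of its clique among those four — wait, more carefully: I want to choose the four vertices so that no two of the six edges lie in the same $C_i$, which is equivalent to saying that no $C_i$ contains three of the four chosen vertices (since a clique on $\geq 3$ of them would contribute $\geq 2$ edges).

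So the problem reduces to: find four vertices of $K_{s+2}$ such that no clique $C_i$ contains three of them. Equivalently, thinking of each $C_i$ of size $\geq 3$ as a "bad triple-set," I must pick a $4$-set hitting no $C_i$ in $\geq 3$ points. I would bound the number of $4$-sets ruined by a single clique $C_i$ of size $c_i$: it is at most $\binom{c_i}{3}(s+2-3) + \binom{c_i}{4} \leq \binom{c_i}{3}(s-1)$, crudely, and then sum over $i$ using the constraint $\sum_i \binom{c_i}{2} = \binom{s+2}{2}$ coming from the edge partition. The total number of $4$-sets is $\binom{s+2}{4}$, which grows like $s^4$, while the number of bad $4$-sets should grow more slowly once the edge-count constraint is used to bound $\sum \binom{c_i}{3}$ (maximized when one clique is as large as possible, i.e. of size $s$). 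I expect the inequality $\sum_i \binom{c_i}{3} \cdot (\text{something like } s) < \binom{s+2}{4}$ to hold, though it will be tight-ish for small $s$ and may need the extra leverage that each vertex is in $\geq 2$ cliques.

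The likely main obstacle is the small-$s$ cases, where $\binom{s+2}{4}$ is not much larger than the bound on bad $4$-sets, so the crude counting above may fail; there I would argue more carefully, perhaps by a direct/probabilistic selection that exploits that the cliques of size exactly $s$ are few (at most two, since $2\binom{s}{2} > \binom{s+2}{2}$ for $s \geq 3$) and handling $s=2$ separately by inspection (a $K_4$ whose edges are partitioned into cliques of size $\leq 2$ is just a proper-ish edge labeling, and the four vertices themselves work). An alternative cleaner route I would try first: pick the two vertices $a, b$ of the largest clique that are *not* forced together elsewhere, then greedily extend — but I suspect the counting argument is the cleanest to write, so I would attempt to push that through with the edge-sum constraint and only fall back to case analysis for $s \in \{2,3\}$.
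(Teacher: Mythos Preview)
Your reformulation is right: the task is to find a $4$-set meeting every $C_i$ in at most two points. But the counting route does not close the way you sketch it, and the obstruction is at large $s$, not small $s$. Your crude upper bound on bad $4$-sets is $(s-1)\sum_i\binom{c_i}{3}$; already with a single clique of size $s$ (and all remaining edges as singleton cliques) this equals $(s-1)\binom{s}{3}=\frac{s(s-1)^2(s-2)}{6}$, which exceeds $\binom{s+2}{4}=\frac{(s+2)(s+1)s(s-1)}{24}$ as soon as $s\ge 5$, and the ratio tends to $4$ as $s\to\infty$. So there is no finite list of base cases after which the inequality kicks in. Replacing the crude bound by the exact $\sum_i\bigl[\binom{c_i}{4}+(s{+}2{-}c_i)\binom{c_i}{3}\bigr]$ saves the single-large-clique example, but to push it through in general you would need to exploit that the cliques actually sit inside a common $(s{+}2)$-set, not merely the numerical identity $\sum_i\binom{c_i}{2}=\binom{s+2}{2}$; nothing in your plan does that.

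The paper bypasses counting with a five-line construction. If some clique $C_1$ has at least three vertices, pick any two vertices $u,v\notin C_1$ (possible since $|C_1|\le s$). The clique containing the edge $uv$ meets $C_1$ in at most one vertex, for otherwise it would share an edge with $C_1$; so choose distinct $x,y\in C_1$ avoiding that vertex. Any clique containing three of $\{x,y,u,v\}$ must contain either the edge $xy$ (hence be $C_1$, which misses $u,v$) or the edge $uv$ (hence miss $x,y$), a contradiction. Your ``alternative cleaner route'' was pointing this way; the key move is to take the first two vertices \emph{outside} a large clique rather than inside it.
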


\begin{proof}
    Without loss of generality, we may assume that $C_1$ has at least three vertices, otherwise the statement is trivial. Since $C_1$ has at most $s$ vertices, there exist distinct vertices $u$ and $v$ which do not belong to $C_1$. Note that the clique containing $u$ and $v$ has at most one element of $C_1$, as the cliques partition the edge set. Since $C_1$ has size at least three, there exist distinct vertices $x$ and $y$ in $C_1$ such that no clique contains $u$, $v$ and at least one of $x$ and $y$. This means that no clique contains at least three elements from the set $\{x,y,u,v\}$, so these four vertices are suitable.
\end{proof}

To see that Lemma \ref{lem:clique partition} implies that $H$ is $K_{s+2}$-free, assume that $H$ does contain a copy of $K_{s+2}$ on vertex set $S$. Note that by property \ref{prop:C4-free} of Proposition \ref{prop:algebraic graph}, for any edge $uv$ in the complete graph $H[S]$, there is a unique $y\in Y$ such that $u,v\in N_F(y)$. Hence, we can partition the edge set of $H[S]$ into cliques, one with vertex set $N_F(y)\cap S$ for each $y\in Y$ such that $|N_F(y)\cap S|\geq 2$. Moreover, any such clique has size at most $s$, for otherwise it would have to contain (at least) two vertices from some $A_i(y)$, meaning that there could not be an edge between these two vertices. Hence, by Lemma \ref{lem:clique partition}, there are four vertices in $S$ such that for any two of them there is a different common neighbour in $Y$ in the graph $F$, contradicting property \ref{prop:subdivisionfree} of Proposition \ref{prop:algebraic graph}.

Our key lemma, proved in Section \ref{sec:Ksfree sets}, is as follows. Here and below we ignore floor and ceiling signs whenever they are not crucial.

\begin{lemma} \label{lem:few Ks-free}
    Let $q$ be a sufficiently large prime and let $t=q^{2-\frac{1}{s-1}}(\log q)^{3}$. Then with positive probability the number of sets $T\subset X$ of size $t$ for which $H[T]$ is $K_s$-free is at most $(q^{\frac{1}{s-1}})^t$.
\end{lemma}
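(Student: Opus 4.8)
The plan is a first-moment computation. Let $Z$ be the number of $t$-subsets $T\subseteq X$ for which $H[T]$ is $K_s$-free; it suffices to prove $\mathbb{E}[Z]<\tfrac12(q^{1/(s-1)})^t$, since Markov's inequality then gives $\Pr[Z\le (q^{1/(s-1)})^t]\ge\tfrac12>0$. Fix a $t$-set $T$ and write $d_y=d_y(T):=|N_F(y)\cap T|$. Since $F$ is $C_4$-free (property \ref{prop:C4-free}), every edge of the complete graph on $T$ that could ever appear in $H$ lies inside a unique clique $N_F(y)$; hence if for some $y$ the set $N_F(y)\cap T$ meets all $s$ of the random parts $A_1(y),\dots,A_s(y)$ then $H[T]$ already contains a $K_s$. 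As the partitions $A(y)$ are chosen independently over $y\in Y$, this yields
\[
\Pr[H[T]\text{ is }K_s\text{-free}]\ \le\ \prod_{y\in Y}\bigl(1-\pi_s(d_y)\bigr),
\]
where $\pi_s(d)$ is the probability that $d$ independent uniform elements of $[s]$ together hit every element of $[s]$. Here $\pi_s(d)=0$ for $d<s$, $\pi_s$ is nondecreasing, and $1-\pi_s(d)\le s(1-1/s)^{d}$; for $s=2$ the bound above is an equality, with right-hand side $2^{-\sum_y(d_y-1)_+}$. Summing, $\mathbb{E}[Z]\le\sum_{|T|=t}\prod_{y\in Y}\bigl(1-\pi_s(d_y(T))\bigr)$, and the entire difficulty lies in bounding this sum.

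To bound it I would build $T$ greedily. Fix an order $T=\{x_1,\dots,x_t\}$, write $U_i=\{x_1,\dots,x_i\}$, and observe that adjoining a vertex $x$ to a current set $U$ multiplies the product by the ``cost'' $w(x,U):=\prod_{y\in N_F(x)}\frac{1-\pi_s(d_y(U)+1)}{1-\pi_s(d_y(U))}\in(0,1]$, which records how much the new vertex tightens the constraints. Telescoping,
\[
\mathbb{E}[Z]\ \le\ \frac1{t!}\sum_{(x_1,\dots,x_t)\text{ distinct}}\ \prod_{i=1}^{t}w(x_i,U_{i-1})\ \le\ \frac1{t!}\ \prod_{i=1}^{t}\Big(\sup_{|U|=i-1}\ \sum_{x\in X}w(x,U)\Big).
\]
The factor $1/t!\approx(e/t)^t$ already supplies a saving of roughly $q^{-(2-1/(s-1))t}$ (and the $(\log q)^{-3t}$ it carries can absorb later polylogarithmic losses, using the $(\log q)^3$ slack built into $t$), so it is enough to show $\prod_{i}\sup_{|U|=i-1}\sum_x w(x,U)\lesssim q^{2t}$. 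Trivially $\sum_x w(x,U)\le|X|\le q^{4}$, which comfortably handles the first $q\cdot\mathrm{polylog}(q)$ steps, since that many factors of $q^{4}$ is only $q^{o(t)}$.

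The crux is therefore the claim that once $|U|$ exceeds a threshold of the form $q\cdot\mathrm{polylog}(q)$ one has $\sum_{x\in X}w(x,U)\le q^{2+o(1)}$: intuitively, by then the image $N_F(U)\subseteq Y$ is spread out enough that, for all but a negligible set of $x\in X$, a large number of the neighbours $y\in N_F(x)$ already have $d_y(U)$ big, forcing $w(x,U)$ to be tiny. Proving this inequality is where the pseudorandomness of the algebraic graph $F$ must be exploited: a naive second-moment/Chebyshev estimate (controlling codegrees via property \ref{prop:C4-free}) only gives $q^{3+o(1)}$, which is not enough, so one needs a genuinely stronger expansion or spectral estimate for $F$, and I expect extracting and applying it to be the main obstacle. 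Everything else — the reduction to this inequality, and then multiplying the trivial bound over the short initial segment, the $q^{2+o(1)}$ bound over the long remaining segment, and the $1/t!$ factor to reach $\mathbb{E}[Z]<\tfrac12(q^{1/(s-1)})^t$ for all large $q$ — is routine bookkeeping.
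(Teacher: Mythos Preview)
Your approach is a first-moment computation over the randomness in $H$, followed by a greedy/telescoping bound on $\sum_T\prod_y(1-\pi_s(d_y(T)))$; the paper instead \emph{conditions} on $H$ having a deterministic ``niceness'' property (Lemma~\ref{lem:thereisgoodscale}) and then counts $K_s$-free sets via the hypergraph container method. So the route is genuinely different --- but more importantly, your reduction breaks at the step you flag as ``the main obstacle'': the inequality $\sup_{|U|\ge q\cdot\mathrm{polylog}(q)}\sum_{x}w(x,U)\le q^{2+o(1)}$ is not merely hard, it is \emph{false} for every $s\ge 3$.

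Here is a concrete counterexample. Fix any $y_0\in Y$ and take $U\subseteq N_F(y_0)$ of size $m$, where $m$ can be any integer up to $q^2$; in particular $m$ can be any value in the range $[q\,\mathrm{polylog}(q),\,t]$ that your argument needs (recall $t<q^2$). By property~\ref{prop:C4-free}, any two vertices of $N_F(y_0)$ have $y_0$ as their unique common neighbour, so $d_y(U)\le 1$ for every $y\ne y_0$. When $s\ge 3$ this means $\pi_s(d_y(U))=\pi_s(d_y(U)+1)=0$ for all $y\ne y_0$, and hence the ratio in $w(x,U)$ equals $1$ at every $y\ne y_0$. Consequently
\[
w(x,U)=\begin{cases}\dfrac{1-\pi_s(m+1)}{1-\pi_s(m)}&\text{if }y_0\in N_F(x),\\[2mm] 1&\text{otherwise,}\end{cases}
\qquad\text{so}\qquad
\sum_{x\in X}w(x,U)\ \ge\ |X|-|N_F(y_0)|\ =\ q^4-q^3.
\]
Thus $\sup_{|U|=i-1}\sum_x w(x,U)\ge q^4-q^3$ for \emph{every} $1\le i\le t$, and your telescoped upper bound is at least $(q^4-q^3)^t/t!\approx (eq^4/t)^t=(eq^{2+1/(s-1)}/(\log q)^3)^t$, which is far larger than $(q^{1/(s-1)})^t$. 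No amount of pseudorandomness or spectral information about $F$ can repair this: the obstruction is that your bound $\prod_y(1-\pi_s(d_y))$ only detects $K_s$'s living inside a single $N_F(y)$, and for $s\ge 3$ a set $U$ concentrated in one block $N_F(y_0)$ produces essentially no such $K_s$'s at any other $y$. Taking the supremum over \emph{all} $U$ throws away exactly the information that $U$ arose from a $K_s$-free set.

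The paper sidesteps this by working top-down rather than bottom-up: it shows that with positive probability $H$ is ``nice'' (every $U$ with $|U|\ge 500s^2q^2$ contains many single-$y$ copies of $K_s$ with balanced parts), and then, for any fixed nice outcome, iterates the container lemma to produce a family of $O(q^2)$-sized containers covering all $K_s$-free sets. The degenerate sets $U\subseteq N_F(y_0)$ above never appear in this argument because they have size at most $q^2$, below the niceness threshold; the container iteration only ever handles sets of size $\gg q^2$, and the final count comes from $\binom{O(q^2)}{t}$.
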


It is easy to deduce Theorem \ref{thm:ERnewbound} from this.

\begin{proof}[Proof of Theorem \ref{thm:ERnewbound}]
    Take an outcome of $H$ which satisfies the conclusion of Lemma \ref{lem:few Ks-free}. Let $\tilde{X}$ be a random subset of $X$ obtained by keeping each vertex independently with probability $q^{-1/(s-1)}$, and let $G_0=H[\tilde{X}]$. Then for each set $T\subset X$, the probability that $T\subset \tilde{X}$ is $(q^{-1/(s-1)})^{|T|}$. Hence, for $t=q^{2-1/(s-1)}(\log q)^{3}$, Lemma \ref{lem:few Ks-free} implies that the expected number of $K_s$-free sets of size $t$ in $G_0$ is at most 1. Removing one vertex from each such set, we obtain a $K_{s+2}$-free graph $G$ in which every vertex set of size $t$ contains a $K_s$. The expected number of vertices in $G$ is at least $|X|q^{-1/(s-1)}-1\geq \frac{1}{2}q^{4-1/(s-1)}-1$, so there exists an outcome for $G$ with at least $\frac{1}{2}q^{4-1/(s-1)}-1$ vertices. So, for each sufficiently large prime $q$, there is a $K_{s+2}$-free graph with at least $\frac{1}{2}q^{(4s-5)/(s-1)}-1$ vertices in which every vertex set of size $q^{(2s-3)/(s-1)}(\log q)^{3}$ contains a $K_s$. Using Bertrand's postulate, this implies that $f_{s,s+2}(n)=O(n^{\frac{2s-3}{4s-5}}(\log n)^{3})$, completing the proof.
\end{proof}

\subsection{The number of $K_s$-free sets} \label{sec:Ksfree sets}

In this section we prove Lemma \ref{lem:few Ks-free}. This is the part of our proof which differs the most from the corresponding argument in \cite{MV23}. Indeed, they proved this lemma for the case $s=2$ by arguing that the graph $H$ is locally ``dense" (with high probability), and therefore by a known result on the number of independent sets in locally dense graphs, the number of independent sets of size $t$ in $H$ is sufficiently small. We do not have an analogue of this result for $s$-independent sets, so we take a slightly different approach, using a version of the celebrated hypergraph container method \cite{BMS15,ST15}. In our approach it is crucial to build the containers in several small steps, and we need to get good control of all the induced subgraphs of $H$ that arise while we run the process. The following lemma will help us achieve this control. It states that with high probability in every large enough vertex set in $H$, we not only have many copies of $K_s$, but we have many complete $s$-partite subgraphs which have similar-sized (and large) parts. (Here and below logarithms are to base $2$.)

\begin{lemma} \label{lem:thereisgoodscale}
    Assume that $q$ is sufficiently large. Then with positive probability, for every $U\subset X$ with $|U|\geq 500s^2q^2$ there exists some $\gamma\geq |U|/q^2$ such that the number of $y\in Y$ with $\gamma/(10s)\leq|A_i(y)\cap U|\leq \gamma$ for all $i\in [s]$ is at least $|U|q/(8(\log q)\gamma)$.
\end{lemma}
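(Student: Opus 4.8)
The plan is to fix a set $U\subset X$ with $|U|\ge 500s^2q^2$, use the structure of $F$ to select \emph{deterministically} a good dyadic scale $\gamma=\gamma(U)\ge |U|/q^2$, show that with overwhelming probability over the random partitions a constant fraction of the relevant $y\in Y$ become ``balanced at scale $\gamma$'', and then close with a union bound over all such $U$. Throughout write $d_U(y)=|N_F(y)\cap U|$, so that by biregularity of $F$ we have $\sum_{y\in Y}d_U(y)=|U|(q+1)$ and $d_U(y)\le q^2$ for every $y\in Y$.

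\textbf{Choosing the scale.} Put $D=|U|/q^2$. The vertices $y$ with $d_U(y)<D/5$ contribute at most $\tfrac{D}{5}|Y|=\tfrac{|U|(q^3+1)}{5q^2}\le\tfrac25|U|q$ to $\sum_yd_U(y)$, so those with $d_U(y)\ge D/5$ contribute at least $\tfrac35|U|q$. Assign each such $y$ to the dyadic scale $\gamma\in\{D,2D,4D,\dots\}$ with $d_U(y)\in(\gamma/2,\gamma]$ (and to $\gamma=D$ when $d_U(y)\in[D/5,D]$); since $d_U(y)\le q^2$ and $D\ge 500s^2$ there are at most $2\log q$ such classes. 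Pigeonholing over them yields a dyadic $\gamma=\gamma(U)\ge D=|U|/q^2$ with
\[
|Y_\gamma|\ \ge\ \frac{3|U|q}{10(\log q)\gamma},\qquad\text{where }Y_\gamma=\{y\in Y:\ d_U(y)\ \text{is at scale}\ \gamma\}.
\]

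\textbf{The random step.} Fix this $\gamma$. For $y\in Y_\gamma$ and $i\in[s]$, the quantity $|A_i(y)\cap U|$ has mean $d_U(y)/s$, which lies in $[\gamma/(10s),\gamma/2]$ because $d_U(y)\ge\gamma/5$ and, as $s\ge2$, $d_U(y)/s\le\gamma/s\le\gamma/2$. By standard concentration (Chernoff, viewing $|A_i(y)\cap U|$ as a binomial, or Azuma in the equipartition model), the probability that some $i$ has $|A_i(y)\cap U|\notin[\gamma/(10s),\gamma]$ is at most $p:=2s\,e^{-c\gamma/s}$ for an absolute constant $c>0$; since $\gamma\ge D\ge 500s^2$, this $p$ is far below $1/4$. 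Because the partitions for distinct $y$ are independent, the number of $y\in Y_\gamma$ that are balanced at scale $\gamma$ stochastically dominates $\mathrm{Bin}(|Y_\gamma|,1-p)$, so with probability at least $1-(2\sqrt p)^{|Y_\gamma|}$ at least $|Y_\gamma|/2\ge\tfrac{3|U|q}{20(\log q)\gamma}>\tfrac{|U|q}{8(\log q)\gamma}$ of them are balanced at scale $\gamma$ --- exactly the conclusion for $U$.

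\textbf{Union bound, and the main obstacle.} Writing $p_U$ and $\gamma(U)$ for the quantities above, it remains to prove $\sum_U(2\sqrt{p_U})^{|Y_{\gamma(U)}|}<1$, the sum over all $U\subset X$ with $|U|\ge 500s^2q^2$. This is the crux of the whole lemma: at a fixed scale $|Y_\gamma|$ can be as small as $\tfrac{|U|q}{q^2\log q}=\tfrac{|U|}{q\log q}$, which is much smaller than the entropy $\Theta(|U|\log q)$ of specifying $U$, so applying Chernoff at the worst scale and union bounding crudely is hopeless. The resolution is the cancellation hidden in $(2\sqrt{p_U})^{|Y_\gamma|}$: since $\gamma\ge D\ge 500s^2$ we have $\log(1/p_U)\gtrsim\gamma/s$, and multiplying by $|Y_\gamma|\ge\tfrac{3|U|q}{10(\log q)\gamma}$ the factor $\gamma$ disappears, leaving $|Y_\gamma|\log(1/p_U)\gtrsim|U|q/(s\log q)$. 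Hence $(2\sqrt{p_U})^{|Y_\gamma|}\le 2^{-\Omega(|U|q/(s\log q))}$, which for $q$ large dominates $\binom{|X|}{|U|}\le 2^{O(|U|\log q)}$; summing the resulting geometric-type series over the values of $|U|$ drives the total failure probability below $1$ (in fact to $o(1)$). Apart from this union-bound step, the only real work is quantitative bookkeeping --- making each of the above estimates tight enough that the constants $500s^2$, $10s$ and $8$ in the statement come out correctly.
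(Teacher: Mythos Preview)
Your proof is correct and follows essentially the same approach as the paper: a dyadic pigeonhole on $d_U(y)=|N_F(y)\cap U|$ to select a scale $\gamma\ge |U|/q^2$ with $|Y_\gamma|\gtrsim |U|q/((\log q)\gamma)$, a Chernoff bound for each $y\in Y_\gamma$, independence across $y$ to bound the probability that fewer than half are balanced, and then the key observation that the resulting failure probability is $\exp(-\Omega(|U|q/(s\log q)))$ (the $\gamma$'s cancel), which beats the $\binom{|X|}{|U|}\le q^{4|U|}$ union bound. The only differences from the paper are cosmetic: your threshold $D/5$ versus their $e_F(U,Y)/(2|Y|)$, and your tail bound $(2\sqrt{p})^{|Y_\gamma|}$ versus their $\binom{|Y_i|}{|Y_i|/2}\exp(-|Y_i|\gamma/(64s))$.
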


\begin{proof}
    Partition $Y$ as follows. Let $$Y_0=\{y\in Y: |N_F(y)\cap U|\leq e_F(U,Y)/(2|Y|)\}$$ and for each $1\leq i\leq 2\log q$, let $$Y_i=\{y\in Y: 2^{i-1}e_F(U,Y)/(2|Y|)< |N_F(y)\cap U|\leq 2^ie_F(U,Y)/(2|Y|)\}.$$
    To see that these sets indeed partition $Y$, note that for each $y\in Y$, we have $|N_F(y)\cap U|\leq d_F(y)=q^2$ and $e_F(U,Y)/(2|Y|)=|U|(q+1)/(2|Y|)\geq 1$. Observe that $e_F(U,Y_0)\leq e_F(U,Y)/2$, so there is some $1\leq i\leq 2\log q$ such that $e_F(U,Y_i)\geq e_F(U,Y)/(4\log q)\geq |U|q/(4\log q)$. Let $\gamma=2^i e_F(U,Y)/(2|Y|)$. Note that $\gamma\geq e_F(U,Y)/|Y|=|U|(q+1)/(q^3+1)\geq |U|/q^2$. Now $|Y_i|\geq e_F(U,Y_i)/\gamma\geq |U|q/(4(\log q)\gamma)$. Let $y\in Y_i$ and let $j\in [s]$. Note that $A_j(y)\cap U$ is a random subset of $N_F(y)\cap U$ which contains each $x\in N_F(y)\cap U$ independently with probability $1/s$. Hence, the expected value of $|A_j(y)\cap U|$ is $|N_F(y)\cap U|/s\geq \gamma/(2s)$. Therefore by the Chernoff bound (see, e.g., Theorem 4 in \cite{goemanschernoff}), the probability that we have $|A_j(y)\cap U|\leq \gamma/(10s)$ is at most $\exp(-\gamma/(16s))$. By the union bound, the probability that there exists some $j\in [s]$ such that $|A_j(y)\cap U|\leq \gamma/(10s)$ (for a fixed $y\in Y_i$) is at most $s\exp(-\gamma/(16s))\leq \exp(-\gamma/(32s))$ (where we used $\gamma\geq |U|/q^2\geq 500s^2$). These events are independent for different vertices $y\in Y_i$, so by the union bound the probability that this happens for more than $|Y_i|/2$ vertices is at most $\binom{|Y_i|}{|Y_i|/2}\exp(-|Y_i|\gamma/(64s))\leq \exp(-|Y_i|\gamma/(128s))\leq \exp(-|U|q/(512s\log q))$, where the last inequality follows from $|Y_i|\geq |U|q/(4(\log q)\gamma)$. We have therefore shown that for every $U\subset X$ of size at least $500s^2q^2$, the probability that a suitable $\gamma$ does not exist is at most $\exp(-|U|q/(512s\log q))$. The result follows after taking union bound over all choices for $U$ since $\sum_{u=1}^{|X|} \binom{|X|}{u}\exp(-uq/(512s\log q))\leq \sum_{u=1}^{\infty} q^{4u}\exp(-uq/(512s\log q))<1$, where the last inequality holds because $q^4\exp(-q/(512s\log q))<1/2$ when $q$ is sufficiently large.
\end{proof}

\begin{definition}
    Let us call an instance of $H$ \emph{nice} if it satisfies the conclusion of Lemma \ref{lem:thereisgoodscale}.
\end{definition}

Lemma \ref{lem:few Ks-free} can now be deduced from the following.

\begin{lemma} \label{lem:few Ks-free if nice}
    Let $q$ be sufficiently large and let $t=q^{2-1/(s-1)}(\log q)^{3}$. If $H$ is nice, then the number of sets $T\subset X$ of size $t$ for which $H[T]$ is $K_s$-free is at most $(q^{1/(s-1)})^t$.
\end{lemma}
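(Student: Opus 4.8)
The goal is to bound the number of $K_s$-free sets of size $t = q^{2-1/(s-1)}(\log q)^3$ in a nice instance of $H$ by $(q^{1/(s-1)})^t$. I would do this via the hypergraph container method, applied to the $s$-uniform hypergraph $\mathcal{H}$ whose vertex set is $X$ and whose edges are the vertex sets of copies of $K_s$ in $H$. A $K_s$-free set in $H$ is exactly an independent set in $\mathcal{H}$, so it suffices to cover all independent sets of $\mathcal{H}$ by a small family of containers, each of which is itself ``almost independent'' (contains few copies of $K_s$) and hence, crucially, small — of size roughly $q^2$ up to logarithmic factors. Then the number of $K_s$-free $t$-sets is at most (number of containers) $\times \binom{\text{container size}}{t}$, and one checks this is at most $(q^{1/(s-1)})^t$ provided the number of containers is not too large (say $2^{o(t)}$, or even $2^{O(|X|^{1-\delta})}$) and each container has size $\tilde O(q^2) \ll t \cdot q^{1/(s-1)} / \log$-factors; here the choice $t = q^{2-1/(s-1)}(\log q)^3$ is exactly calibrated to make $\binom{\tilde O(q^2)}{t} \le (q^{1/(s-1)})^{t(1-o(1))}$ work.

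The key point, and the reason the lemma is stated for \emph{nice} instances, is that I need a supersaturation-type statement at \emph{every scale} during the container process: whenever I have a current container $U$ that is still somewhat large (say $|U| \ge 500s^2 q^2$), the induced subgraph $H[U]$ must contain many copies of $K_s$, distributed regularly enough that a single container-step makes real progress. This is precisely what Lemma~\ref{lem:thereisgoodscale} provides: for such $U$ there is a scale $\gamma \ge |U|/q^2$ and at least $|U|q/(8(\log q)\gamma)$ vertices $y \in Y$ whose classes $A_i(y) \cap U$ are all of size between $\gamma/(10s)$ and $\gamma$ — each such $y$ contributes a complete $s$-partite subgraph, hence $\gtrsim (\gamma/(10s))^s$ copies of $K_s$ inside $U$, and summing over these $y$ gives a good lower bound on the number of edges of $\mathcal{H}$ inside $U$, together with degree control (no vertex of $U$ lies in too many of these $K_s$'s, using $C_4$-freeness to bound codegrees in $F$). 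I would feed this into the container lemma iteratively: starting from $U = X$, repeatedly apply one ``step'' of the container machinery to replace the current container by a smaller one while recording a bounded-size ``fingerprint'' of the independent set, until the container shrinks below the threshold $500s^2 q^2$. Running the process in many small steps — as the excerpt emphasizes — keeps each container above the threshold until the very end, so Lemma~\ref{lem:thereisgoodscale} is always applicable; the total fingerprint size accumulated over all steps stays $\tilde O(q^2)$ or so, which controls the number of containers.

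The main obstacle I anticipate is the careful bookkeeping needed to make the iteration close. Each container-step shrinks $|U|$ only by a factor like $(1 - c/\log q)$ or so (because the density guaranteed by Lemma~\ref{lem:thereisgoodscale} degrades by a $\log q$ factor relative to a perfectly regular graph), so it takes on the order of $(\log q)\cdot\log|X| = \tilde O((\log q)^2)$ steps to drive $|U|$ from $|X| \approx q^4$ down to $\tilde O(q^2)$; at each step the fingerprint grows, and I must verify that the total fingerprint is small enough that the number of containers, $\binom{|X|}{\le \text{total fingerprint size}}$, is at most $2^{o(t)}$. Simultaneously I must track the codegree/maximum-degree conditions of $\mathcal{H}$ restricted to the shrinking $U$ — these come from the structure of $F$ (each pair of vertices of $X$ has at most one common neighbour in $Y$ by $C_4$-freeness, so each pair lies in at most one $A_i(y)$, hence in a controlled number of $K_s$'s) — and confirm they meet the hypotheses of whichever quantitative form of the container lemma I invoke. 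Balancing these three quantities (number of steps, fingerprint growth per step, and the final container size) against the target $t = q^{2-1/(s-1)}(\log q)^3$ is where the three powers of $\log q$ in the statement of Lemma~\ref{lem:few Ks-free} are consumed, and getting all the constants and exponents to line up is the delicate part; the probabilistic input (niceness) and the algebraic input ($C_4$- and $K_4$-subdivision-freeness of $F$) are comparatively clean once the container framework is set up.
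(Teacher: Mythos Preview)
Your proposal is correct and follows essentially the same approach as the paper: apply the hypergraph container lemma iteratively to the $s$-uniform hypergraph of $K_s$'s, using niceness (Lemma~\ref{lem:thereisgoodscale}) at each step to verify the codegree hypotheses, shrink each container by a factor $(1-\Omega_s(1/\log q))$ over $O_s((\log q)^2)$ iterations until it has size $O_s(q^2)$, and finish with the count $(\text{number of containers})\cdot\binom{O_s(q^2)}{t}$. Two minor remarks: the per-step fingerprint is actually $O_s(q^{2-1/(s-1)})$ rather than $\tilde O(q^2)$ (which only helps), and the $K_4$-subdivision-freeness of $F$ plays no role in this lemma---only $C_4$-freeness is used, for the codegree bounds.
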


In what follows, we will consider an $s$-uniform hypergraph on vertex set $X$ whose hyperedges correspond to the copies of $K_s$ in $H$. Then $K_s$-free subsets of $X$ will correspond to independent sets in this hypergraph, so to prove Lemma \ref{lem:few Ks-free if nice}, it suffices to bound the number of independent sets of certain size. This will be achieved using the hypergraph container method. For an $s$-uniform hypergraph $\mathcal{G}$ and some $\ell \in [s]$, we write $\Delta_{\ell}(\mathcal{G})$ for the maximum number of hyperedges in $\mathcal{G}$ containing the same set of $\ell$ vertices. Moreover, we write $\mathcal{I}(\mathcal{G})$ for the collection of independent sets in $\mathcal{G}$. 

The following result was proved in \cite{MSS18}. 

\begin{proposition}[{\cite[Theorem 1.5]{MSS18}}] \label{prop:BMScontainer}
    Suppose that positive integers $s$, $b$ and $r$ and a non-empty $s$-uniform hypergraph $\mathcal{G}$ satisfy that for every $\ell\in [s]$,
    $$\Delta_{\ell}(\mathcal{G})\leq \left(\frac{b}{v(\mathcal{G})}\right)^{\ell-1}\frac{e(\mathcal{G})}{r}.$$

    Then there exists a family $\mathcal{S}\subset \binom{V(\mathcal{G})}{\leq sb}$ and functions $f:\mathcal{S}\rightarrow \mathcal{P}(V(\mathcal{G}))$ and $g:\mathcal{I}(\mathcal{G})\rightarrow \mathcal{S}$ such that for every $I\in \mathcal{I}(\mathcal{G})$,
    $$I\subset f(g(I))$$
    and
    $$|f(g(I))|\leq v(\mathcal{G})-\delta r,$$
    where $\delta=2^{-s(s+1)}$.
\end{proposition}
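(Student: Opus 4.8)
This is a ``one-step'' form of the hypergraph container lemma, and the plan is to prove it by exhibiting an explicit deterministic \emph{container algorithm}, following the Kleitman--Winston algorithm for graphs and its hypergraph generalizations due to Balogh--Morris--Samotij and Saxton--Thomason \cite{BMS15,ST15}. Write $n=v(\mathcal{G})$ and $m=e(\mathcal{G})$, and fix once and for all an arbitrary linear order $\prec$ on $V(\mathcal{G})$. The algorithm takes as input an independent set $I\in\mathcal{I}(\mathcal{G})$ and returns a \emph{fingerprint} $S=S(I)\subseteq I$ together with a \emph{container} $C=C(S)$ with $I\subseteq C(S)$, where $C(S)$ is determined by $S$ and the fixed data $\mathcal{G},\prec$ alone. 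Granting such an algorithm with $|S(I)|\leq sb$ and $|C(S(I))|\leq n-\delta r$ for every $I$, the proposition follows immediately upon setting $\mathcal{S}:=\{S(I):I\in\mathcal{I}(\mathcal{G})\}$, $g:=\big(I\mapsto S(I)\big)$ and $f:=\big(S\mapsto C(S)\big)$.

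The algorithm maintains a set $A\subseteq V(\mathcal{G})$ of \emph{available} vertices (initially $A=V(\mathcal{G})$) and a partial fingerprint $S$ (initially empty), and runs through phases $\ell=1,\dots,s$. In phase $\ell$ it repeatedly selects the vertex $u\in A$ maximising an appropriate ``level-$\ell$ weighted degree'' --- roughly, the number of edges of $\mathcal{G}$ containing $u$ together with the vertices chosen into $S$ so far and having all remaining vertices inside $A$ --- breaking ties by $\prec$. If this maximum exceeds a threshold $\tau_\ell$ of order $(b/n)^{\ell-1}m/(br)$ the vertex $u$ is \emph{processed}: if $u\in I$ it is moved from $A$ into $S$, and if $u\notin I$ it is simply deleted from $A$; the phase continues until the maximum weighted degree drops below $\tau_\ell$, and then phase $\ell+1$ begins. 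When phase $s$ ends the algorithm halts and outputs $S$ together with $C:=A\cup S$. Two features make this valid. First, \emph{reconstructibility}: the vertex $u$ processed at each step is determined by $\mathcal{G}$, $\prec$ and the current $A$ and $S$, and any processed vertex lying in $I$ is put into $S$; hence, given only the final fingerprint $S$, one may replay the loop --- classifying each processed vertex as a selection if it lies in $S$ and as a deletion otherwise --- and thereby recover the final pair $(A,S)$, so that $C(S)=A\cup S$ is well defined. Since only vertices outside $I$ are ever deleted from $A$, we indeed have $I\subseteq C(S)$. Secondly, at most $b$ vertices are added to $S$ in each of the $s$ phases, so $|S|\leq sb$.

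There remain the two quantitative claims. The bound $|S|\leq sb$ is obtained from a supersaturation/potential argument: each vertex selected in phase $\ell$ has weighted degree greater than $\tau_\ell$ and therefore ``consumes'' more than $\tau_\ell$ edges extending the current selected set, while the hypothesis $\Delta_\ell(\mathcal{G})\leq(b/n)^{\ell-1}e(\mathcal{G})/r$ bounds the total stock of such edges by roughly $b\tau_\ell$, forcing at most $b$ selections per phase. For the container bound $|C(S)|\leq n-\delta r$ --- equivalently, that at least $\delta r$ vertices are deleted from $A$ --- one argues by contradiction: if fewer than $\delta r$ vertices were deleted, then, using $\Delta_1(\mathcal{G})\leq m/r$ to control the damage done by deletions and the thresholds $\tau_\ell$ to control the edges lost through selections, the hypergraph still ``visible'' at the end of phase $s$ would retain a positive fraction of the edges of $\mathcal{G}$ while having all level-$s$ weighted degrees below $\tau_s$; choosing $\tau_s$ below the average weighted degree makes this impossible. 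The thresholds are calibrated so that the clean constant $\delta=2^{-s(s+1)}$ comes out.

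The main obstacle is this last piece of bookkeeping: tracking, across the $s$ phases, how the ``level-$\ell$ weighted degree'' in the running, localized hypergraph is governed by the original $\ell$-uniform co-degree $\Delta_\ell(\mathcal{G})$ --- in particular making the exponents $(b/n)^{\ell-1}$ line up so that each of the $s$ hypotheses is used exactly once --- together with the ``container is small'' estimate carried out carefully enough to yield the explicit constant $\delta=2^{-s(s+1)}$. One could of course avoid reproving any of this and instead invoke the container machinery of \cite{BMS15,ST15}, from whose proofs the stated one-step lemma can be extracted; but the self-contained route is the algorithm sketched above.
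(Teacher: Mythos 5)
The paper does not prove this proposition at all: it is quoted as a black box from \cite{MSS18} (Theorem 1.5 there), a packaged ``one-step'' version of the hypergraph container lemma of \cite{BMS15,ST15}. So there is no internal proof to compare against; the only comparison is between your sketch and the external argument, whose algorithmic skeleton (a Kleitman--Winston-style deterministic fingerprint/container algorithm with reconstructibility) you have correctly identified.

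As a self-contained proof, however, your proposal has a genuine gap: the two quantitative assertions --- that at most $b$ vertices of $I$ are selected in each of the $s$ phases, and that at least $\delta r$ vertices are deleted from $A$ with the specific constant $\delta=2^{-s(s+1)}$ --- are exactly the content of the proposition, and you present them as outcomes of an unspecified ``supersaturation/potential argument'' and a ``contradiction'' without ever defining the level-$\ell$ weighted degrees or the thresholds $\tau_\ell$, let alone carrying out the bookkeeping. In the real argument one must work with the link hypergraphs of the fingerprint built so far (so the uniformity drops as the phases proceed), show that the hypothesis on $\Delta_{\ell}$ controls both the maximum degree in the current link hypergraph and how many of its edges a single selection can destroy (which brings in $\Delta_{\ell+1}$ as well), and only after this interplay is tracked across all $s$ phases do the per-phase bound of $b$ selections and the loss of $\Omega(r)$ available vertices, with the constant $2^{-s(s+1)}$, emerge. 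Your own text concedes this (``the main obstacle is this last piece of bookkeeping'') and then falls back on citing \cite{BMS15,ST15}. Citing the result is perfectly legitimate --- it is precisely what the paper does --- but in that case the algorithm description is scaffolding rather than proof; as written, the decisive estimates are asserted, not established.
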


\begin{corollary} \label{cor:BMScontainer}
    For every positive integer $s\geq 2$ and positive reals $p$ and $\lambda$, the following holds. Suppose that $\mathcal{G}$ is an $s$-uniform hypergraph with at least two vertices such that $pv(\mathcal{G})$ and $v(\mathcal{G})/\lambda$ are integers, and for every $\ell\in [s]$,
    $$\Delta_{\ell}(\mathcal{G})\leq \lambda\cdot p^{\ell-1}\frac{e(\mathcal{G})}{v(\mathcal{G})}.$$

    Then there exists a collection $\mathcal{C}$ of at most $v(\mathcal{G})^{spv(\mathcal{G})}$ sets of size at most $(1-\delta \lambda^{-1})v(\mathcal{G})$ such that for every $I\in \mathcal{I}(\mathcal{G})$, there exists some $R\in \mathcal{C}$ with $I\subset R$, where $\delta=2^{-s(s+1)}$.
\end{corollary}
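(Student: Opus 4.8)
The plan is to deduce this directly from Proposition~\ref{prop:BMScontainer} by choosing the parameters $b=p\,v(\mathcal{G})$ and $r=v(\mathcal{G})/\lambda$. By hypothesis these are integers, and they are positive because $p,\lambda>0$ and $v(\mathcal{G})\geq 2$; together with the positive integer $s\geq 2$ this is all the integrality we need. (We may also assume $\mathcal{G}$ is non-empty, i.e.\ $e(\mathcal{G})\geq 1$, which is the only situation in which the corollary will be used.) First I would verify that the hypothesis of Proposition~\ref{prop:BMScontainer} holds with this choice: for every $\ell\in[s]$,
$$\left(\frac{b}{v(\mathcal{G})}\right)^{\ell-1}\frac{e(\mathcal{G})}{r}=p^{\ell-1}\cdot\frac{\lambda\,e(\mathcal{G})}{v(\mathcal{G})}=\lambda\,p^{\ell-1}\frac{e(\mathcal{G})}{v(\mathcal{G})}\geq\Delta_{\ell}(\mathcal{G}),$$
where the last inequality is exactly the assumed degree condition. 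Hence Proposition~\ref{prop:BMScontainer} applies and produces a family $\mathcal{S}\subseteq\binom{V(\mathcal{G})}{\leq sb}$ together with maps $f\colon\mathcal{S}\to\mathcal{P}(V(\mathcal{G}))$ and $g\colon\mathcal{I}(\mathcal{G})\to\mathcal{S}$ such that $I\subseteq f(g(I))$ and $|f(g(I))|\leq v(\mathcal{G})-\delta r$ for every $I\in\mathcal{I}(\mathcal{G})$.

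Next I would translate this back into the statement of the corollary. Since
$$v(\mathcal{G})-\delta r=v(\mathcal{G})-\delta\,\frac{v(\mathcal{G})}{\lambda}=(1-\delta\lambda^{-1})v(\mathcal{G}),$$
I would take $\mathcal{C}=\{f(S):S\in\mathcal{S},\ |f(S)|\leq(1-\delta\lambda^{-1})v(\mathcal{G})\}$. By construction every member of $\mathcal{C}$ has size at most $(1-\delta\lambda^{-1})v(\mathcal{G})$, and for every $I\in\mathcal{I}(\mathcal{G})$ the set $f(g(I))$ lies in $\mathcal{C}$ (its size satisfies the bound) and contains $I$, so $\mathcal{C}$ is a container family of the required type.

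Finally, for the cardinality bound, $|\mathcal{C}|\leq|\mathcal{S}|$ is at most the number of subsets of $V(\mathcal{G})$ of size at most $sb=s\,p\,v(\mathcal{G})$, and a routine estimate — treating the cases $s\,p\,v(\mathcal{G})\geq v(\mathcal{G})$ and $s\,p\,v(\mathcal{G})<v(\mathcal{G})$ separately and using $v(\mathcal{G})\geq 2$ together with $sb\geq s\geq 2$ — gives $\sum_{j=0}^{sb}\binom{v(\mathcal{G})}{j}\leq v(\mathcal{G})^{sb}=v(\mathcal{G})^{s\,p\,v(\mathcal{G})}$, as claimed. I do not expect any real obstacle in this deduction: it is essentially a change of variables in the hypotheses and conclusion of Proposition~\ref{prop:BMScontainer}. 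The one mildly delicate point is that Proposition~\ref{prop:BMScontainer} guarantees the size bound $|f(S)|\leq v(\mathcal{G})-\delta r$ only for $S$ in the image of $g$, which is why $\mathcal{C}$ is defined by discarding the (a priori possibly too large) sets $f(S)$ that violate it, rather than by taking all of $f(\mathcal{S})$.
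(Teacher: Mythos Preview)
Your proposal is correct and follows essentially the same approach as the paper: set $b=p\,v(\mathcal{G})$, $r=v(\mathcal{G})/\lambda$, apply Proposition~\ref{prop:BMScontainer}, and bound $|\mathcal{S}|$ by $v(\mathcal{G})^{spv(\mathcal{G})}$. The only cosmetic difference is in handling the ``delicate point'' you flag: the paper first replaces $\mathcal{S}$ by the image $g(\mathcal{I}(\mathcal{G}))$ and then sets $\mathcal{C}=f(\mathcal{S})$, whereas you instead discard the sets $f(S)$ that fail the size bound; these are equivalent fixes.
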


\begin{proof}
    We apply Proposition \ref{prop:BMScontainer} with $b=pv(\mathcal{G})$ and $r=v(\mathcal{G})/\lambda$ and, after replacing $\mathcal{S}$ with $g(\mathcal{I}(\mathcal{G}))$, we take $\mathcal{C}=f(\mathcal{S})$. Then clearly $|\mathcal{C}|\leq |\mathcal{S}|\leq |\binom{V(\mathcal{G})}{\leq spv(\mathcal{G})}|\leq  v(\mathcal{G})^{spv(\mathcal{G})}$, where the last inequality holds since $\sum_{i=0}^k \binom{n}{i}\leq n^k$ for each $n\geq k\geq 2$.
    Moreover, any set in $\mathcal{C}$ has size at most $v(\mathcal{G})-\delta r=(1-\delta \lambda^{-1})v(\mathcal{G})$ and each $I\in \mathcal{I}(\mathcal{G})$ is contained in $f(g(I))\in \mathcal{C}$.
\end{proof}

Let $\mathcal{H}$ be the $s$-uniform hypergraph on vertex set $X$ in which a set of $s$ vertices form a hyperedge if they form a $K_s$ in $H$. The next lemma shows that if $H$ is nice, then a suitable subgraph of~$\mathcal{H}$ (chosen with the help of Lemma \ref{lem:thereisgoodscale}) satisfies the codegree conditions in Corollary \ref{cor:BMScontainer} with small values of $\lambda$ and $p$.

\begin{lemma} \label{lem:bounded degree}
    Assume that $H$ is nice. Then for each $U\subset X$ of size at least $500s^2q^2$ there exists a subgraph $\mathcal{G}$ of $\mathcal{H}[U]$ (on vertex set $U$) which satisfies
    \begin{equation}
        \Delta_{\ell}(\mathcal{G})\leq \lambda\cdot p^{\ell-1}\frac{e(\mathcal{G})}{v(\mathcal{G})} \label{eqn:bounded codegrees}
    \end{equation}
    for every $\ell\in [s]$ with $\lambda=O_s(\log q)$ and $p\leq |U|^{-1}q^{2-1/(s-1)}$.
\end{lemma}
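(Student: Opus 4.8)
The plan is to use the ``good scale'' $\gamma$ provided by the niceness of $H$ (Lemma \ref{lem:thereisgoodscale}) to select a sub-hypergraph $\mathcal{G}$ of $\mathcal{H}[U]$ whose hyperedges are precisely the ``balanced'' copies of $K_s$: those obtained by picking one vertex from each part $A_i(y)\cap U$ for a vertex $y\in Y$ satisfying $\gamma/(10s)\le |A_i(y)\cap U|\le \gamma$ for all $i\in[s]$. Call such a $y$ \emph{good}. Since $H$ is nice and $|U|\ge 500s^2q^2$, there is a scale $\gamma\ge |U|/q^2$ with at least $m:=|U|q/(8(\log q)\gamma)$ good vertices $y$. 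Each good $y$ contributes exactly $\prod_{i=1}^s|A_i(y)\cap U|$ hyperedges to $\mathcal{G}$, which lies between $(\gamma/(10s))^s$ and $\gamma^s$; importantly, since $F$ is $C_4$-free (property \ref{prop:C4-free}), distinct good vertices $y\ne y'$ give \emph{disjoint} hyperedge sets (any $K_s$ in $\mathcal{G}$ determines the unique $y$ all of whose neighbourhood contains that clique), so there is no overcounting. Hence $e(\mathcal{G})=\sum_{y \text{ good}}\prod_i|A_i(y)\cap U|$ satisfies $e(\mathcal{G})\ge m(\gamma/(10s))^s$, and $v(\mathcal{G})=|U|$, so $e(\mathcal{G})/v(\mathcal{G})\ge m(\gamma/(10s))^s/|U|\ge \Omega_s\big(q\gamma^{s-1}/\log q\big)$ using $m=|U|q/(8(\log q)\gamma)$.

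Next I bound the codegrees $\Delta_\ell(\mathcal{G})$ for $\ell\in[s]$. Fix a set $L$ of $\ell$ vertices in $U$; we count good $y$ and balanced cliques through $L$. Using $C_4$-freeness again: if $\ell\ge 2$, the $\ell$ vertices of $L$ have at most one common neighbour $y\in Y$ in $F$ (any two of them already pin down $y$), so there is at most one good $y$ whose neighbourhood contains $L$, and the number of hyperedges of $\mathcal{G}$ containing $L$ is then at most $\prod_{i\notin\{j:\,A_j\cap L\ne\emptyset\}}|A_i(y)\cap U|\le \gamma^{s-\ell}$ (since $L$ must hit at least $\ell$ distinct parts, leaving at most $s-\ell$ free coordinates — note a hyperedge meets each part in exactly one vertex, so $L$ lies in $\ell$ distinct parts). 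Thus $\Delta_\ell(\mathcal{G})\le \gamma^{s-\ell}$ for $2\le\ell\le s$. For $\ell=1$, a single vertex $x\in U$ lies in $A_i(y)\cap U$ for those good $y$ with $x\in N_F(y)$, of which there are at most $d_F(x)=q+1$; each such $y$ contributes at most $\gamma^{s-1}$ hyperedges through $x$, so $\Delta_1(\mathcal{G})\le (q+1)\gamma^{s-1}\le 2q\gamma^{s-1}$.

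Now I verify \eqref{eqn:bounded codegrees}. Set $p$ so that $p\cdot(e(\mathcal{G})/v(\mathcal{G}))$ matches the drop of a single factor of $\gamma$ in the codegree bound as $\ell$ increases by one: comparing $\Delta_\ell\le\gamma^{s-\ell}$ ($\ell\ge2$) with $\lambda p^{\ell-1}e(\mathcal{G})/v(\mathcal{G})$, and using $e(\mathcal{G})/v(\mathcal{G})\ge c_s\,q\gamma^{s-1}/\log q$, it suffices to take $p$ of order $1/(q\gamma)$, since then $p^{\ell-1}e(\mathcal{G})/v(\mathcal{G})\gtrsim \gamma^{s-\ell}q^{2-\ell}/\log q$, and absorbing the factor $q^{2-\ell}/\log q$ (which is $\le 1$ for $\ell\ge2$ once $q$ is large, but needs attention at $\ell=1,2$) into $\lambda=O_s(\log q)$ handles all cases; for $\ell=1$ the bound reads $\Delta_1\le 2q\gamma^{s-1}\le \lambda e(\mathcal{G})/v(\mathcal{G})$, which holds with $\lambda=O_s(\log q)$ since $e(\mathcal{G})/v(\mathcal{G})\ge c_s q\gamma^{s-1}/\log q$. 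Finally, since $\gamma\ge |U|/q^2$, we get $p=O(1/(q\gamma))=O(q/(q^2|U|)\cdot q)=O(q^{-1}|U|^{-1}q^2\cdot\tfrac1{q})$... more carefully, $p\asymp 1/(q\gamma)\le q^2/(q|U|)\cdot$const; one checks this is $\le |U|^{-1}q^{2-1/(s-1)}$ because $\gamma$ can be assumed at most roughly $q^{1/(s-1)}$ at the relevant scales — here one should instead argue that if $\gamma$ is too large then the trivial bound $\Delta_\ell(\mathcal{G})\le\gamma^{s-\ell}$ together with $e(\mathcal{G})\ge m(\gamma/10s)^s$ still forces $p$ small, i.e. track the exact power of $\gamma$. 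I expect the main obstacle to be precisely this book-keeping of the powers of $\gamma$ and $q$: one must choose $p$ (and confirm the integrality of $pv(\mathcal{G})$ and $v(\mathcal{G})/\lambda$, which can be arranged by a harmless rounding) so that the single inequality $p\le |U|^{-1}q^{2-1/(s-1)}$ holds \emph{uniformly over all admissible scales $\gamma\ge|U|/q^2$}, which is where the exponent $2-\frac{1}{s-1}$ — and hence the final bound in Theorem \ref{thm:ERnewbound} — comes from.
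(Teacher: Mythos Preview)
Your construction of $\mathcal{G}$ and the codegree bounds $\Delta_\ell(\mathcal{G})\le\gamma^{s-\ell}$ for $\ell\ge 2$ and $\Delta_1(\mathcal{G})\le (q+1)\gamma^{s-1}$, together with the edge-count estimate $e(\mathcal{G})/v(\mathcal{G})\ge c_s\,q\gamma^{s-1}/\log q$, are all correct and coincide with the paper's argument. The gap is in the choice of $p$. With your choice $p\asymp 1/(q\gamma)$ one gets
\[
\lambda\,p^{\ell-1}\frac{e(\mathcal{G})}{v(\mathcal{G})}\gtrsim \lambda\cdot\frac{q^{2-\ell}\gamma^{s-\ell}}{\log q},
\]
and to make this exceed $\Delta_\ell\le\gamma^{s-\ell}$ you would need $\lambda\ge q^{\ell-2}\log q$. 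For $\ell=s\ge 3$ this forces $\lambda\ge q^{s-2}\log q$, which is \emph{not} $O_s(\log q)$; your remark that ``$q^{2-\ell}/\log q\le 1$'' goes in the wrong direction --- a small factor on the right-hand side makes the inequality \emph{harder}, not easier. The speculation that ``$\gamma$ can be assumed at most roughly $q^{1/(s-1)}$'' has no basis: the niceness condition only gives $\gamma\ge |U|/q^2$, and $\gamma$ may be as large as $q^2$.

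The fix is to take $p=(\gamma q^{1/(s-1)})^{-1}$ instead. Then, from $\gamma\ge |U|/q^2$, one immediately gets $p\le |U|^{-1}q^{2-1/(s-1)}$ with no further assumption on $\gamma$. Moreover
\[
\lambda\,p^{\ell-1}\frac{e(\mathcal{G})}{v(\mathcal{G})}\ge c_s\lambda\cdot q^{\,1-\frac{\ell-1}{s-1}}\,\gamma^{s-\ell}/\log q,
\]
and since $1-\frac{\ell-1}{s-1}\ge 0$ for every $\ell\in[s]$, the power of $q$ is always at least $1$; hence $\lambda=C_s\log q$ suffices for all $\ell$ simultaneously (the case $\ell=1$ is handled by $e(\mathcal{G})/v(\mathcal{G})\ge c_s q\gamma^{s-1}/\log q$ directly, as you noted). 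This is exactly where the exponent $2-\tfrac{1}{s-1}$ enters: it is forced by the tightest constraint, which occurs at $\ell=s$, not at $\ell=2$.
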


\begin{proof}
    By Lemma \ref{lem:thereisgoodscale} there exists some $\gamma\geq |U|/q^2$ such that the number of $y\in Y$ with $\gamma/(10s)\leq|A_i(y)\cap U|\leq \gamma$ for all $i\in [s]$ is at least $|U|q/(8(\log q)\gamma)$. Let $p=(\gamma q^{\frac{1}{s-1}})^{-1}\leq |U|^{-1}q^{2-1/(s-1)}$. Let $E(\mathcal{G})$ consist of all $s$-sets $\{x_1,x_2,\dots,x_s\}$ in $U$ for which there exists $y\in Y$ with $\gamma/(10s)\leq|A_i(y)\cap U|\leq \gamma$ and $x_i\in A_i(y)\cap U$ for all $i\in [s]$. Clearly, such $x_1,x_2,\dots,x_s$ form a $K_s$ in $H$, so $\mathcal{G}$ is indeed a subgraph of $\mathcal{H}$.

    It remains to verify the codegree condition (\ref{eqn:bounded codegrees}).  Roughly speaking, the codegrees are small because for any set $S$ of at least two vertices in $U$, there is at most one vertex $y\in Y$ in the common neighbourhood of $S$ (since $F$ is $C_4$-free), and then all hyperedges in $\mathcal{G}$ containing $S$ live entirely in $N_F(y)$. More precisely, as we are only using those vertices $y\in Y$ to define hyperedges in $\mathcal{G}$ which satisfy $|A_i(y)\cap U|\leq \gamma$ for all $i$, we have $\Delta_{\ell}(\mathcal{G})\leq \gamma^{s-\ell}$ for each $2\leq \ell\leq s$. 
    Moreover, as $d_F(x)=q+1$ for all $x\in X$, we have $\Delta_1(\mathcal{G})\leq (q+1)\gamma^{s-1}$.

    On the other hand, note that $e(\mathcal{G})\geq \frac{|U|q}{8(\log q)\gamma}\cdot (\frac{\gamma}{10s})^s=\Omega_s(|U|q\gamma^{s-1}/\log q)$, so $e(\mathcal{G})/v(\mathcal{G})=\Omega_s(q\gamma^{s-1}/\log q)$. It follows that if $\lambda=C\log q$ for a sufficiently large constant $C=C(s)$, then $\lambda\cdot p^{\ell-1}\frac{e(\mathcal{G})}{v(\mathcal{G})}\geq 2q^{1-(\ell-1)/(s-1)}\gamma^{s-\ell}$. It follows that (\ref{eqn:bounded codegrees}) holds for each $1\leq \ell\leq s$.
\end{proof}

Combining Corollary \ref{cor:BMScontainer} and Lemma \ref{lem:bounded degree}, we prove the following result.

\begin{lemma} \label{lem:container}
    Let $q$ be sufficiently large and assume that $H$ is nice. Let $U$ be a subset of $X$ of size at least $500s^2q^2$. Now there exists a collection $\mathcal{C}$ of at most $(q^4)^{sq^{2-1/(s-1)}}$ sets of size at most $(1-\Omega_s((\log q)^{-1}))|U|$ such that for any $K_s$-free (in $H$) set $T\subset U$ there exists some $R\in \mathcal{C}$ with $T\subset R$.
\end{lemma}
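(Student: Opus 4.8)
The plan is to apply Lemma~\ref{lem:bounded degree} to obtain a subgraph $\mathcal{G}$ of $\mathcal{H}[U]$ on vertex set $U$ satisfying the codegree bound $\Delta_\ell(\mathcal{G})\leq \lambda p^{\ell-1}e(\mathcal{G})/v(\mathcal{G})$ for all $\ell\in[s]$, with $\lambda=O_s(\log q)$ and $p\leq |U|^{-1}q^{2-1/(s-1)}$. Since the container proposition (Corollary~\ref{cor:BMScontainer}) requires $pv(\mathcal{G})$ and $v(\mathcal{G})/\lambda$ to be integers, the first technical step is to adjust $p$ and $\lambda$ slightly: replace $p$ by $\lceil p|U|\rceil/|U|$ and $\lambda$ by a value in, say, $[\lambda,2\lambda]$ making $|U|/\lambda$ an integer. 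These adjustments change $p$ and $\lambda$ by at most a factor of $2$ (using $|U|\geq 500s^2q^2$, so $p|U|\geq q^{1/(s-1)}\gg 1$, and similarly $|U|\gg\lambda$), so the codegree hypotheses still hold (the constant $C=C(s)$ in Lemma~\ref{lem:bounded degree} can absorb this) and we still have $p\leq 2|U|^{-1}q^{2-1/(s-1)}$ and $\lambda=O_s(\log q)$.

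With these integrality conditions in place, I would feed $\mathcal{G}$, $p$ and $\lambda$ into Corollary~\ref{cor:BMScontainer}. It yields a collection $\mathcal{C}$ of at most $v(\mathcal{G})^{spv(\mathcal{G})}=|U|^{sp|U|}$ sets, each of size at most $(1-\delta\lambda^{-1})|U|$, such that every independent set $I$ of $\mathcal{G}$ is contained in some member of $\mathcal{C}$. Now I translate this back to the language of the lemma. Every set $T\subseteq U$ that is $K_s$-free in $H$ is, in particular, $K_s$-free in the subgraph on the same vertex set, hence an independent set in the hypergraph $\mathcal{H}[U]$, and since $\mathcal{G}$ is a subgraph of $\mathcal{H}[U]$ on vertex set $U$, it is also independent in $\mathcal{G}$; so $T\subseteq R$ for some $R\in\mathcal{C}$. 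For the size bound: $(1-\delta\lambda^{-1})|U|$ with $\delta=2^{-s(s+1)}$ a constant and $\lambda=O_s(\log q)$ gives exactly a set of size at most $(1-\Omega_s((\log q)^{-1}))|U|$. For the cardinality bound: $sp|U|\leq 2sq^{2-1/(s-1)}$ by the bound on $p$, and $|U|\leq |X|=q^4-q^3+q^2\leq q^4$, so $|\mathcal{C}|\leq |U|^{sp|U|}\leq (q^4)^{2sq^{2-1/(s-1)}}$. (If one wants the cleaner exponent $sq^{2-1/(s-1)}$ stated in the lemma, absorb the factor $2$ by noting $(q^4)^{2sq^{2-1/(s-1)}}=(q^8)^{sq^{2-1/(s-1)}}$ and adjusting, or simply observe that the statement as written already allows a loss here; either way the bound $(q^4)^{sq^{2-1/(s-1)}}$ is what is needed downstream and can be arranged by choosing constants appropriately, or by slightly shrinking the implicit constant in $p$.)

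The main obstacle is not conceptual but bookkeeping: one must check carefully that the two integrality rounding steps do not destroy the codegree inequality~(\ref{eqn:bounded codegrees}), which is why it is convenient that Lemma~\ref{lem:bounded degree} is stated with $\lambda=O_s(\log q)$ (with an unspecified constant) and with an inequality $p\leq |U|^{-1}q^{2-1/(s-1)}$ rather than an equality — there is slack to absorb constant factors. The only point requiring a moment's care is that $\mathcal{G}$ has at least two vertices (needed for the bound $\sum_{i=0}^k\binom{n}{i}\leq n^k$ in Corollary~\ref{cor:BMScontainer}), which is immediate since $|U|\geq 500s^2q^2\geq 2$, and that $\mathcal{G}$ is non-empty, i.e.\ has at least one hyperedge — this follows from $e(\mathcal{G})=\Omega_s(|U|q\gamma^{s-1}/\log q)>0$ as computed in the proof of Lemma~\ref{lem:bounded degree}, using that the count of good $y\in Y$ is positive and $\gamma\geq|U|/q^2>0$. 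Everything else is a direct substitution.
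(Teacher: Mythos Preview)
Your approach is essentially identical to the paper's: apply Lemma~\ref{lem:bounded degree} to obtain $\mathcal{G},\lambda,p$, feed these into Corollary~\ref{cor:BMScontainer}, and read off the conclusion using $|U|\le q^4$, $p|U|\le q^{2-1/(s-1)}$ and $\lambda=O_s(\log q)$. The paper dispenses with the integrality discussion entirely, having already declared that floor and ceiling signs are ignored; your extra bookkeeping on rounding $p$ and $\lambda$ is harmless and in fact more careful than the paper, though one of your side remarks---that $p|U|\ge q^{1/(s-1)}$---does not follow from the stated bounds (only $p|U|\le q^{2-1/(s-1)}$ is given, and $\gamma$ can be as large as about $q^2$, making $p|U|$ potentially small). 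This does not matter: rounding $p$ upward only relaxes the codegree inequalities, and the extra factor in the container count from $\lceil p|U|\rceil\le p|U|+1$ is absorbed exactly as you indicate.
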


\begin{proof}
    Choose a hypergraph $\mathcal{G}$ and parameters $\lambda,p$ according to Lemma \ref{lem:bounded degree}. By Corollary \ref{cor:BMScontainer}, there exists a collection $\mathcal{C}$ of at most $|U|^{sp|U|}$ sets of size at most $(1-2^{-s(s+1)}\lambda^{-1})|U|$ such that for every independent set $I$ in $\mathcal{G}$, there exists some $R\in \mathcal{C}$ such that $I\subset R$. The lemma follows by noting that any $K_s$-free set is an independent set in $\mathcal{G}$, $|U|\leq q^4$, $p\leq |U|^{-1}q^{2-1/(s-1)}$ and $\lambda=O_s(\log q)$.
\end{proof}

\begin{corollary} \label{cor:few sets}
    Let $q$ be sufficiently large and assume that $H$ is nice. Then there is a collection $\mathcal{C}$ of at most $(q^4)^{O_s(q^{2-1/(s-1)}(\log q)^2)}$ sets of size at most $500s^2q^2$ such that for any $K_s$-free (in $H$) set $T\subset X$ there exists some $R\in \mathcal{C}$ such that $T\subset R$.
\end{corollary}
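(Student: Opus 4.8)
The plan is to iterate Lemma \ref{lem:container}, building a rooted tree of containers whose root is $X$ and whose leaves all have size at most $500s^2q^2$. First I would apply Lemma \ref{lem:container} with $U=X$; this is legitimate since $|X|=q^4-q^3+q^2\geq 500s^2q^2$ for $q$ large. This produces a family of at most $(q^4)^{sq^{2-1/(s-1)}}$ containers, each of size at most $(1-c/\log q)|X|$ for some constant $c=c(s)>0$ coming from the $\Omega_s((\log q)^{-1})$ in that lemma, such that every $K_s$-free (in $H$) set $T\subset X$ is contained in one of them. I would then repeat: given any container $R$ constructed so far with $|R|\geq 500s^2q^2$, apply Lemma \ref{lem:container} with $U=R$ to replace it by at most $(q^4)^{sq^{2-1/(s-1)}}$ further containers, each of size at most $(1-c/\log q)|R|$, each $K_s$-free subset of $R$ being contained in one of these (using that being $K_s$-free in $H$ is inherited by subsets). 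Containers that reach size below $500s^2q^2$ are declared leaves and are not refined further.

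Next I would bound the depth of this process. Since each refinement multiplies the size by a factor at most $1-c/\log q\leq e^{-c/\log q}$, a container obtained after $j$ refinements has size at most $q^4 e^{-cj/\log q}$, and this is below $500s^2q^2$ once $cj/\log q\geq 2\ln q$, that is, once $j\geq m$ for some $m=O_s((\log q)^2)$ (here I use $\log q=\Theta(\ln q)$). Hence every branch of the tree terminates within $m$ steps, and at the end every container in play is a leaf of size at most $500s^2q^2$. Since the tree has depth at most $m$ and branching factor at most $B:=(q^4)^{sq^{2-1/(s-1)}}$, the number of leaves is at most $B^{m}=\big((q^4)^{sq^{2-1/(s-1)}}\big)^{m}=(q^4)^{O_s(q^{2-1/(s-1)}(\log q)^2)}$.

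Finally, taking $\mathcal{C}$ to be the collection of all leaf containers, a straightforward induction on depth shows that every $K_s$-free (in $H$) set $T\subset X$ is contained in some $R\in\mathcal{C}$: $T\subset X$ at the root, and whenever $T$ lies in a current container of size at least $500s^2q^2$ the induction step is a single application of Lemma \ref{lem:container}, while if the container is already smaller it is a leaf. Each leaf has size at most $500s^2q^2$ by construction, which is exactly the stated conclusion.

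I do not expect a genuine obstacle here: the whole content sits in the single-step Lemma \ref{lem:container}, and the only point requiring care is the interplay between the per-step shrinkage of merely a $(1-\Theta_s(1/\log q))$ factor and the need to shrink the container size by an overall polynomial factor of order $q^2$. It is precisely this mismatch that forces $\Theta_s((\log q)^2)$ iterations, and hence the extra factor of $(\log q)^2$ (rather than $\log q$) in the exponent of the final count. One should also note that the degenerate case $|X|<500s^2q^2$ never occurs, which is immediate for large $q$.
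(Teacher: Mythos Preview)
Your proposal is correct and follows essentially the same approach as the paper: iterate Lemma~\ref{lem:container}, noting that each application shrinks the container by a factor $1-\Omega_s(1/\log q)$, so that after $O_s((\log q)^2)$ rounds every container has size at most $500s^2q^2$, and the total number of containers is at most $\big((q^4)^{sq^{2-1/(s-1)}}\big)^{O_s((\log q)^2)}$. The paper phrases this as an induction on the number of rounds (defining collections $\mathcal{C}_j$), while you describe it as a rooted tree with bounded depth and branching factor, but the content is identical.
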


\begin{proof}
    By Lemma \ref{lem:container}, there exists a positive constant $c_s$ such that whenever $U$ is a subset of $X$ of size at least $500s^2q^2$, then there is a collection $\mathcal{D}$ of at most $(q^4)^{sq^{2-1/(s-1)}}$ sets of size at most $(1-c_s(\log q)^{-1})|U|$ such that for any $K_s$-free set $T\subset U$ there exists some $R\in \mathcal{D}$ with $T\subset R$.

    We prove by induction that for each positive integer $j$ there is a collection $\mathcal{C}_j$ of at most $(q^4)^{jsq^{2-1/(s-1)}}$ sets of size at most $\max\left(500s^2q^2,(1-c_s(\log q)^{-1})^j|X|\right)$ such that for any $K_s$-free set $T\subset X$ there exists some $R\in \mathcal{C}_j$ with $T\subset R$. Note that, since $|X| \leq q^4$, by choosing $j$ to be a suitable integer of order $\Theta_s((\log q)^{2})$, the corollary follows. The base case $j=1$ is immediate by the first paragraph (applied in the special case $U=X$).

    Let now $\mathcal{C}_j$ be a suitable collection for $j$ and define $\mathcal{C}_{j+1}$ as follows. For each $U\in \mathcal{C}_j$ of size greater than $500s^2q^2$, take a collection $\mathcal{D}(U)$ of at most $(q^4)^{sq^{2-1/(s-1)}}$ sets of size at most $(1-c_s(\log q)^{-1})|U|$ such that for any $K_s$-free set $T\subset U$ there exists some $R\in \mathcal{D}(U)$ with $T\subset R$. Let $$\mathcal{C}_{j+1}=\{U\in \mathcal{C}_j:|U|\leq 500s^2q^2\}\cup \bigcup_{U\in \mathcal{C}_j:|U|>500s^2q^2} \mathcal{D}(U).$$
    Clearly, $|\mathcal{C}_{j+1}|\leq |\mathcal{C}_j|(q^4)^{sq^{2-1/(s-1)}}\leq (q^4)^{(j+1)sq^{2-1/(s-1)}}$.
    Moreover, since every set in $\mathcal{C}_j$ has size at most $\max\left(500s^2q^2,(1-c_s(\log q)^{-1})^j|X|\right)$, it follows that any set in $\mathcal{C}_{j+1}$ has size at most $\max\left(500s^2q^2,(1-c_s(\log q)^{-1})^{j+1}|X|\right)$. Finally, for any $K_s$-free set $T\subset X$ there exists some $U\in \mathcal{C}_j$ with $T\subset U$ and hence there exists some $R\in \mathcal{C}_{j+1}$ (either $U$ or some element of $\mathcal{D}(U)$) such that $T\subset R$. This completes the induction step and the proof.
\end{proof}

Corollary \ref{cor:few sets} implies that if $q$ is sufficiently large and $H$ is nice, then the number of $K_s$-free sets of size $t=q^{2-1/(s-1)}(\log q)^{3}$ in $H$ is at most $$(q^4)^{O_s(q^{2-1/(s-1)}(\log q)^{2})}\binom{500s^2q^2}{t}\leq (q^4)^{O_s(q^{2-1/(s-1)}(\log q)^{2})}(q^{1/(s-1)}/\log q)^t\leq (q^{1/(s-1)})^t,$$
proving Lemma \ref{lem:few Ks-free if nice}.

\section{Concluding remarks}

In this paper we constructed, for each fixed $s\geq 2$ and each $n$, a $K_{s+2}$-free $n$-vertex graph in which every vertex set of size at least roughly $n^{\frac{2s-3}{4s-5}}(\log n)^{3}$ induces a $K_s$.

It is not too hard to see that any construction which improves the exponent $\frac{2s-3}{4s-5}$ would have to have significantly fewer triangles than the random graph with the same edge density. Indeed, assume that $G$ is a $K_{s+2}$-free $n$-vertex graph with $e$ edges and $t$ triangles in which every subset of size at least $m$ induces a $K_s$, where $m\ll n^{\frac{2s-3}{4s-5}}$. Note that if $uv$ is an edge in $G$, then the common neighbourhood of $u$ and $v$ is $K_s$-free, so it must have size less than $m$. Hence, for each $i\geq 3$, the number of copies of $K_i$ in $G$ is at most $em^{i-2}$. On the other hand, every set of size $m$ contains a copy of $K_s$, so every set of size $2m$ contains at least $m$ copies, therefore, by a simple averaging argument, $G$ contains at least roughly $m\cdot (n/m)^s$ copies of $K_s$. Thus, comparing the upper and lower bound for the number of copies of $K_s$ in $G$, we have $em^{s-2}\geq n^s/m^{s-1}$, so $e\geq n^s/m^{2s-3}$. Since $m\ll n^{\frac{2s-3}{4s-5}}$, it follows that $e^2\geq n^{2s}/m^{4s-6}\gg n^3 m$ and hence $t\leq em\ll (e/n)^3$. This precisely means that $G$ has much fewer triangles than the random graph with the same edge density.

In particular, this shows that any construction improving our bound would have to be very different from the one used in this paper and from those used in \cite{DR11,Wol13,DRR14,GJ20}. Indeed, all these papers use so-called ``random block constructions" where we start with a suitable bipartite graph $F$ with parts $X$ and $Y$ and we define a new graph $G$ on vertex set $X$ by randomly placing complete $s$-partite graphs inside $N_F(y)$ for each $y\in Y$. Note that each $C_6$ in $F$ produces a triangle in $G$ with positive probability. Indeed, if we have a $C_6$ in $F$ with vertices $x_1,y_1,x_2,y_2,x_3,y_3$ in natural order, then $x_1x_2$ is an edge in $G$ with positive probability, since both $x_1$ and $x_2$ belong to $N_F(y_1)$. Similarly, $x_2x_3$ and $x_3x_1$ are edges with positive probability, thanks to their common neighbours $y_2$ and $y_3$, respectively. Since the complete $s$-partite graphs within $N_F(y)$ are placed independently of each other for all $y\in Y$, there is a positive probability that $x_1x_2x_3$ is a triangle in $G$. Now note that, unless $F$ is very sparse, it has at least as many $C_6$s as the random bipartite graph with the same edge density (i.e. $C_6$ supersaturates). It follows that $G$ has at least as many triangles as the random graph with the same edge density. Even after taking random induced subgraphs, this property remains, showing that any construction beating our bound would have to be significantly different.

\bibliographystyle{abbrv}
\bibliography{bibliography}

\end{document}